\DeclareSymbolFont{slantedbx}{OT1}{cmr}{bx}{sl}
\DeclareSymbolFontAlphabet\mathslbx{slantedbx}
\DeclareSymbolFont{AMSa}{U}{msa}{m}{n} \DeclareMathSymbol{\upharpoonright} {\mathrel}{AMSa}{"16}
\def\oG{{\overline{G}}}
\def\ol#1{{\overline{#1}}}
\def\gd{{G^*_{u,v}}}
\def\gde{{G_{u,v}}}
\def\ff{{\cal F}}
\def\gg{{\cal G}}
\def\ii{{\cal I}}
\def\dd{{\cal D}}
\def\w{{\ol{\lambda}}}
\def\wuvd{{\lambda^*}}
\def\wuv{{\lambda}}
\def\ignore#1{{\vglue 0.05 cm}}
\newtheorem{thm}{Theorem} 
\newtheorem{thm*}{Theorem*}
\newtheorem{lem}[thm]{Lemma} 
\newtheorem{conj}[thm]{Conjecture} 
\newtheorem{proposition}[thm]{Proposition}
\newtheorem{question}[thm]{Question}
\DeclareMathOperator{\cro}{\rm cr}
\def\real{{\mathbb{R}}}
\title{\bf Making a graph crossing-critical \\ by multiplying its edges}
\author[1]{Laurent Beaudou}
\author[2]{C\'esar Hern\'andez--V\'elez}
\author[,2]{Gelasio Salazar\thanks{Supported by CONACYT Grant 106432.}}
\affil[1]{LIMOS, Universit\'e Blaise Pascal, Clermont-Ferrand, France}
\affil[2]{Instituto de F\'\i sica, Universidad Aut\'onoma de San Luis Potos\'{\i}. San Luis Potos\'{\i}, Mexico 78000}
\begin{document}
\maketitle

\begin{abstract}
A graph is {\em crossing-critical} 
if the removal of any of its edges
decreases its crossing number. 
This work is motivated by the following question: to what extent
is crossing-criticality a property that is inherent to the structure
of a graph, and to what extent can it be induced on a noncritical
graph by
multiplying (all or some of) its edges?
It is shown that if a nonplanar graph $G$ is
obtained by adding an edge to a cubic polyhedral graph, and $G$ is
sufficiently connected, then $G$ can be made crossing-critical by a
suitable multiplication of edges.
\end{abstract}



\section{Introduction}

This work is motivated  by the recent breakthrough constructions by DeVos, Mohar and \v{S}\'amal~\cite{devosetal} and Dvo\v{r}\'ak and Mohar~\cite{dvorakmohar}, which settled 
two important
crossing numbers questions. The graphs constructed in~\cite{devosetal}
and~\cite{dvorakmohar} use weighted (or ``thick'') edges. A graph
with weighted edges
can be naturally transformed into an ordinary graph by substituting
weighted edges by  multiedges
(recall that a {\em multiedge} is a set of edges with
the same pair of endvertices). If one wishes to avoid multigraphs, one
can always substitute a weight $t$ edge by a $K_{2,t}$, but still the
resulting graph is homeomorphic to a multigraph. Sometimes (as
in~\cite{devosetal}) one can afford to substitute each weighted edge by a
slightly richer structure (such as a graph obtained from $K_{2,t}$ by
joining the degree $2$ vertices with a path), but sometimes (as
in~\cite{dvorakmohar}) one is concerned with criticality properties,
and so no such superfluous edges may be added.  In any case, the use of
weighted edges is crucial. 

After trying unsuccessfully to come up with graphs with similar
crossing number properties as those presented in~\cite{devosetal}
and~\cite{dvorakmohar}, while avoiding the use of weighted edges, we were left with a wide open question: in the realm of
crossing numbers, 
more specifically on crossing-criticality issues, 
to what extent does it make a difference to allow (equivalently, to
forbid) weighted edges (or, for that matter, multiedges)?








Recall that the {\em crossing number} $\cro(G)$ of a graph $G$ is the minimum number of pairwise intersections of edges in a drawing of $G$ in the plane. 
An edge $e$ of $G$ is {\em crossing-critical} if $\cro(G-e) < \cro(G)$. If all edges of $G$ are crossing-critical, then  $G$ itself is {\em crossing-critical}. A crossing-critical graph seems naturally more interesting
than a graph with some not crossing-critical edges, since a graph of the latter kind contains a proper subgraph that has all the relevant information from the crossing numbers point of view.  


Earlier constructions of infinite families of crossing-critical graphs made essential use
of multiple edges~\cite{siran0}. On the other hand, constructions such
the ones given by Kochol~\cite{kochol}, Hlin\v{e}n\'y~\cite{hli1}, and
Bokal~\cite{bokal}
deal exclusively with simple graphs. 

We ask to what extent crossing-criticality is an inherent structural property of a graph, and to what extent crossing-criticality can be induced by multiplying the edges of a (noncritical) graph. Let $G, H$ be graphs. We say that $G$ {\em is obtained by multiplying edges of   $H$} if $H$ is a subgraph of $G$ and, for every edge of $G$, there is an edge of $H$ with the same endvertices.

\begin{question}\label{question1}
When can a graph be made crossing-critical by multiplying edges? That is, given a (noncritical) graph $H$, when does there exist a crossing-critical graph $G$ that is obtained by multiplying edges of $H$?
\end{question}


Our universe of interest is, of course, the set of nonplanar graphs, since a planar graph obviously remains planar after multiplying any or all of its edges.

We show that a large, interesting family of nonplanar graphs satisfy the property in Question~\ref{question1}. A nonplanar graph $G$ is {\em near-planar} if it has an edge $e$ such that $G-e$ is planar. Near-planar graphs constitute a natural family of nonplanar graphs. Any thought to the effect that crossing number problems might become easy when restricted to near-planar graphs is put definitely to rest by the recent  proof by Cabello and Mohar that {\sc   CrossingNumber} is NP-Hard for near-planar graphs~\cite{cabellomohar1}.

Following Geelen et al.~\cite{Ge} (who define internally
$3$-connectedness for matroids), a graph $G$ is \emph{internally $3$-connected} if $G$ is
simple and $2$-connected, and for every separation $(G_1, G_2)$ of $G$ of
order two, either $|E(G_1)| \leq 2$ or $|E(G_2)| \leq 2$. Hence,
internally $3$-connected graphs are those that can be obtained from a
$3$-connected graph by subdividing its edges, with the condition that
no edge can be subdivided more than once.

Our main result is that any adequately connected, near-planar graph
$G$ obtained by adding an edge to a cubic polyhedral (i.e., planar and
$3$-connected) graph, belongs to the class alluded to in
Question~\ref{question1}.

\begin{thm}\label{maintheorem}
Let $G$ be a near-planar simple graph, with an edge $uv$ such that
$G-uv$ is a cubic polyhedral graph.  Suppose that $G -\{u,v\}$ is 
internally $3$-connected. Then there exists a crossing-critical
graph that is obtained by multiplying edges of $G$.
\end{thm}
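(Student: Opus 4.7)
The plan is to encode the crossing number of any multiplication $G'$ of $G$ as a weighted shortest-path length in an auxiliary dual graph $D$, choose edge multiplicities via an $st$-numbering of $D$ so that every edge of $D$ lies on a shortest $u^*$-$v^*$ path, and handle the rigid $u,v$-incident edges by a separate structural argument.

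\textbf{Dual setup and the crossing-number formula.} Since $H:=G-uv$ is cubic and polyhedral, it has a unique planar embedding and its planar dual $H^*$ is a simple triangulation. The vertex $u$ (resp.\ $v$) of $H$ corresponds to a triangular face $T_u$ (resp.\ $T_v$) of $H^*$. Let $D$ be obtained from $H^*$ by contracting $V(T_u)$ to a single vertex $u^*$, contracting $V(T_v)$ to $v^*$, and discarding the loops that arise. For any $G'$ obtained from $G$ by multiplying edges, with $t$ copies of $uv$ and multiplicity $m_e$ on each edge $e$ of $H$ not incident to $u$ or $v$, a standard argument exploiting the uniqueness of $H$'s embedding gives
\[
  \cro(G') \;=\; t \cdot d_m(u^*, v^*),
\]
where $d_m$ is the shortest $u^*$-$v^*$ path length in $D$ with edge-weights $m_e$. (Edges of $H$ incident to $u$ or $v$ are never crossed in an optimal drawing, so I keep them at multiplicity $1$.)

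\textbf{Choosing the multiplicities.} I would first show that $D+u^*v^*$ is $2$-connected, using that $H^*$ is $3$-connected (since $H$ is) and that the internal $3$-connectedness of $H-\{u,v\}$ controls how much connectivity the double contraction $T_u\to u^*$, $T_v\to v^*$ can destroy. The Lempel--Even--Cederbaum theorem then yields an $st$-numbering $\sigma:V(D)\to\{1,\ldots,N\}$ with $\sigma(u^*)=1$ and $\sigma(v^*)=N$. Set $m_e:=|\sigma(x)-\sigma(y)|$ for each edge $xy$ of $D$, and fix any integer $t\ge 1$. In an $st$-numbering every vertex lies on a strictly $\sigma$-monotone path from $u^*$ and on one to $v^*$, so every edge of $D$ lies on some shortest $u^*$-$v^*$ path, of length $N-1$. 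Criticality for each copy of $uv$ is immediate: $\cro(G')$ drops from $t(N-1)$ to $(t-1)(N-1)$ when one copy is deleted. Criticality for each copy of a non-$u,v$-incident edge $e$ follows because $e$ lies on a shortest path, so reducing $m_e$ by one reduces $d_m$ by at least one, hence $\cro(G')$ by at least $t$.

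\textbf{The hard case: edges incident to $u$ or $v$.} Each edge $uu_i$ has multiplicity $1$, so deleting it removes it from $G'$ entirely, and here the main difficulty appears. A direct computation shows that in $H-uu_i$ the two remaining $u$-faces form a bigon in $(H-uu_i)^*$, and contracting this bigon together with $T_v$ produces the same graph $D$ with the same weights; thus the embedding of $H-uu_i$ inherited from $H$'s embedding gives the same shortest walk, and removing $uu_i$ does not decrease the crossing number via this planar embedding. Criticality must therefore be certified by exhibiting an \emph{alternative} planar embedding of $H-uu_i$: since $H-uu_i$ is only $2$-connected, its SPQR decomposition offers flips at new $2$-separations, and the internal $3$-connectedness of $H-\{u,v\}$ should pinpoint a flip that repositions $u$ in the embedding of $H-uu_i$ so that the weighted shortest $u^*$-$v^*$ walk in the resulting dual is strictly less than $N-1$. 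Making this rigorous---describing the relevant $2$-separation of $H-uu_i$ in terms of the $3$-connected components of $H-\{u,v\}$, exhibiting the explicit flipped embedding, and verifying that the shortest walk under the $st$-numbering weights strictly decreases---is the heart of the argument, with a completely symmetric discussion at $v$ for the edges $vv_j$.
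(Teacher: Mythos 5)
Your $st$-numbering device is a clean and correct substitute for the paper's Lemma~\ref{thelemma} (which uses a Tutte rubber-band argument to put every dual edge on a shortest $F_uF_v$-path), and your treatment of the edges of $G-\{u,v\}$ and of the copies of $uv$ matches the paper's argument. However, your decision to leave the six edges $uu_1,uu_2,uu_3,vv_1,vv_2,vv_3$ at multiplicity $1$ creates two genuine gaps. First, the formula $\cro(G')=t\cdot d_m(u^*,v^*)$ is not justified in your setup and is false in general: to prove the lower bound one must consider drawings that place $u$ in a face $F'\neq F_u$ and $v$ in a face $F''\neq F_v$, and such a drawing costs only about $\sum_i d_m(u_i,F') + \sum_j d_m(v_j,F'') + t\cdot d_m(F',F'')$, which with unit multiplicities on the star edges can be strictly smaller than $t\cdot d_m(F_u,F_v)$ whenever relocating $u$ saves more on the heavy edge $uv$ than it costs on the three light edges (already for moderate $t$). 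Conditions (3) and (5) of the paper's Proposition~\ref{pro:ifthere} are exactly what rule this out, and they force the star edges to carry large, carefully chosen multiplicities.

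Second, and more seriously, the criticality of the star edges is the heart of the theorem and you leave it open. Your proposed route (re-embedding $H-uu_i$ via SPQR flips) is speculative and works against the hypotheses, which make $G-uv$ polyhedral precisely so that embeddings are rigid; there is no reason a flip producing a strictly shorter dual walk should exist. The paper's solution is entirely different: it multiplies the star edges as well, choosing $(r_1,r_2,r_3)$ as a rational point of the polyhedron $\bigl\{x : \sum_i d(u_i,F)x_i \ge d(F_u,F)\cdot \omega(uv) \text{ for all faces } F\bigr\}$ lying on enough bounding hyperplanes that for each $i$ there is a face $U_i$ with $d(u_i,U_i)>0$ at which the corresponding inequality is tight. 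One can then redraw $G$ with $u$ placed in $U_i$ at no increase in the total number of crossings while forcing $uu_i$ to be crossed a positive number of times, which certifies the criticality of $uu_i$; the argument at $v$ is symmetric, and a final smallness condition controls the crossings between the two stars. Without this mechanism (or an equivalent one) your argument establishes criticality for neither the lower bound nor the six star edges, so the theorem is not proved.
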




We note that some connectivity assumption is needed in order to
guarantee that a nonplanar graph can be made crossing-critical by
multiplying edges. To see this, consider a graph $G$ which is the
$1$-sum of a nonplanar graph $G_1$ plus a planar graph $G_2$. Since
crossing number is additive on the blocks of a graph, it is easy to
see that $G$ cannot be made crossing-critical by multiplying edges.

An important ingredient in the proof of Theorem~\ref{maintheorem} is
the following, somewhat curious statement for which we could not find
any reference in the literature. We recall that a {\em weighted graph}
is a pair $(G,w)$, where $G$ is a graph and $w$ (the {\em weight
  assignment}) is a map that assigns to each edge $e$ of $G$ a number
$w(e)$, the {\em weight} of $e$. The {\em length} of a path in a
weighted graph is the sum of the weights of the edges in the path. If
$u,v$ are vertices of $G$, then the {\em distance} $d_{w}(u,v)$ {\em
  from $u$ to $v$} ({\em under $w$}) is the length of a minimum length
(also called a {\em shortest}) {$uv$-path}.  The weight assignment $w$
is {\em positive} if $w(e) >0$ for every edge $e$ of $G$, and it is
{\em integer} if each $w(e)$ is an integer.

\begin{lem}\label{thelemma}
Let $G$ be a $2$-connected loopless graph, and let $u,v$ be distinct vertices of
$G$. Then there is a positive integer 
weight assignment
such that every edge of $G$ belongs to a shortest $uv$-path.
\end{lem}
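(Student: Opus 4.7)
The plan is to reduce Lemma~\ref{thelemma} to the existence of an $st$-numbering of $G$ with source $u$ and sink $v$. Recall that an $st$-numbering is a bijection $\phi \colon V(G) \to \{1, \dots, n\}$ with $\phi(u) = 1$, $\phi(v) = n$, and every other vertex possessing both a neighbour with a smaller label and a neighbour with a larger label. Granted such a $\phi$, I would simply assign each edge $xy$ the weight $w(xy) := |\phi(x) - \phi(y)|$; since $\phi$ is a bijection, $w$ is automatically a positive integer weight assignment, and no further choices need to be made.

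Verifying the shortest-path property then reduces to two standard observations. First, the telescoping bound
$$
\sum_{i=0}^{k-1} |\phi(x_{i+1}) - \phi(x_i)| \;\geq\; |\phi(v) - \phi(u)| \;=\; n - 1
$$
shows that every $uv$-path $u = x_0, x_1, \dots, x_k = v$ in $(G, w)$ has length at least $n-1$, with equality if and only if $\phi$ is monotone along the path. Second, the $st$-numbering property lets one build, starting from any vertex $z$, a $\phi$-increasing path ending at $v$ and a $\phi$-decreasing path ending at $u$ (repeatedly step to a neighbour of larger, respectively smaller, label). Applying both constructions at the endpoints of an arbitrary edge $xy$ (say $\phi(x) < \phi(y)$) and gluing them with $xy$, I obtain a $uv$-path through $xy$ of total weight $(\phi(x) - 1) + (\phi(y) - \phi(x)) + (n - \phi(y)) = n - 1$, which is therefore a shortest $uv$-path. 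This handles every edge.

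The real content is therefore the existence of the $st$-numbering. By the classical theorem of Lempel, Even, and Cederbaum, $G$ admits a $uv$-numbering whenever $G + uv$ is $2$-connected; since adding an edge to a $2$-connected graph preserves $2$-connectedness, this is immediate from the hypothesis on $G$, regardless of whether $uv$ is already an edge. If a self-contained proof is preferred, one can construct $\phi$ by induction on an open ear decomposition of $G + uv$ whose first cycle passes through $u$ and $v$: insert the internal vertices of each new ear, in the order they appear along the ear, with fractional labels placed strictly between those of the ear's endpoints, and then renumber to $\{1, \dots, n\}$ at the end. I expect the only delicate point to be checking that each ear-insertion preserves the ``smaller neighbour / larger neighbour'' property for the vertices it touches; after that, the rest of the lemma is a direct verification of the telescoping argument above.
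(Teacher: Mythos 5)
Your proof is correct, and it reaches the same underlying structure as the paper by a different route. Both arguments ultimately produce a potential function $\phi$ on the vertices with $\phi(u)$ minimal, $\phi(v)$ maximal, and every other vertex having both a strictly smaller and a strictly larger neighbour, and then set $w(xy)=|\phi(x)-\phi(y)|$ so that the telescoping inequality forces every monotone path to be shortest. The difference is in how $\phi$ is obtained: the paper uses Tutte's rubber-band (spring equilibrium) method, placing all vertices on the segment $[u,v]$ so that each internal vertex is a convex combination of its neighbours, and then perturbs to rational, pairwise distinct positions before clearing denominators; you instead invoke the Lempel--Even--Cederbaum $st$-numbering theorem (or an ear-decomposition construction of it). Your version buys rigor and economy: the integrality is automatic, there is no perturbation step, and the delicate point in the paper's argument --- that one can separate coincident vertices ``locally'' while preserving the property that every edge still lies on a shortest $uv$-path --- is exactly the ``smaller neighbour / larger neighbour'' condition that an $st$-numbering guarantees by definition. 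The paper's version buys a construction that is self-contained modulo Menger's theorem and generalizes naturally to geometric settings. One detail worth making explicit in your write-up: the decreasing walk from $x$ to $u$ and the increasing walk from $y$ to $v$ are vertex-disjoint because all labels on the former are at most $\phi(x)$ and all labels on the latter are at least $\phi(y)>\phi(x)$, so the concatenation really is a path; and since the lemma allows parallel edges, one should note that the shortest path through $\{x,y\}$ can be routed through whichever copy of the edge is under consideration.
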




The rest of this paper is structured as follows.

We prove the auxiliary Lemma~\ref{thelemma} in Section~\ref{prooflemma}. We then proceed to reformulate Theorem~\ref{maintheorem} in terms of weighted graphs. This simply consists on replacing a multigraph with a weighted simple graph so that the weights of the edges are the multiplicities of the edges in the original multigraph. As in~\cite{devosetal} and~\cite{dvorakmohar}, this reformulation, carried out in Section~\ref{equivalent}, turns out to greatly simplify the discussion and the proofs. The equivalent form of Theorem~\ref{maintheorem}, namely Theorem~\ref{maintheorem2}, is then proved in Section~\ref{prooftheorem}, the core of this paper. Finally, we present some concluding remarks and open questions in Section~\ref{concludingremarks}.

\section{Proof of Lemma~\ref{thelemma}}\label{prooflemma}

We use  perfect rubber bands,
a technique inspired by the work of Tutte~\cite{tutte}.

Make every edge a perfect rubber band and pin vertices $u$ and $v$ on
a board. Since $G$ is 2-connected, and with use of Menger's theorem,
every other vertex $w$ admits two vertex-disjoints paths, one linking
$w$ with $u$ and the other linking $w$ with $v$. Therefore, every
vertex will lie on the segment $[u,v]$ on the board. Since
$\mathbb{Q}$ is dense in $\mathbb{R}$, we may modify vertices
positions so that all edge lengths are rational (since they all are
barycentric coordinates, it suffices to have a rational distance
between $u$ and $v$). We may also modify these coordinates locally so
that no two vertices lie at the same spot.

Therefore, every edge lies on a shortest path from $u$ to $v$.

In the end, we may multiply every length by the least common multiple
of the denominators, so that we get an integer length function on the
edges meeting our requirements.\hfill {$\square$}

\section{Reformulating Theorem~\ref{maintheorem} in
  terms \\ of weighted graphs}\label{equivalent}

In the context of Theorem~\ref{maintheorem}, let $G$ be a simple graph
which we seek to make crossing-critical by multiplying (some or
all of) its edges.  With this in mind, let $\overline{G}$ be a
multigraph (that is, a graph with multiedges allowed) whose
underlying simple graph is $G$. Now consider the (positive integer)
weight assignment $w$ on $E(G)$ defined as follows: for each edge $uv$
of $G$, let $w(uv)$ be the number of edges in $\overline{G}$ whose
endpoints are $u$ and $v$ (i.e., the {\em multiplicity} of $uv$).

If we extend the definition of crossing number to weighted graphs,
with the condition that a crossing between two edges contributes to
the total crossing number by the products of their weights, then, from
the crossing numbers point of view, clearly $(G,w)$ captures all the
relevant information from $\overline{G}$.  In particular,
$\cro(\overline{G})= \cro(G,w)$. Moreover, by extending the definition
of crossing-criticality to weighted graphs in the obvious way (which
we now proceed to do), it will follow that $\overline{G}$ is
crossing-critical if and only if $(G,w)$ is crossing-critical.

To this end, let $G$ be a graph and $w$ a positive integral weight assignment on $G$. An edge $e$ of $(G,w)$ is {\em crossing-critical} if $\cro(G,w_e) < \cro(G,w)$, where $w_e$ is the weight assignment defined by $w_e(f) = w(f)$ for $f\neq e$ and $w_e(e) = w(e)-1$. As with ordinary graphs, $(G,w)$ is {\em crossing-critical} if all its edges are crossing-critical.

Under this definition of crossing-criticality for weighted graphs, it is now obvious that if we start with a multigraph $\overline{G}$ and derive its associated weighted graph $(G,w)$ as above, then $\overline{G}$ is crossing-critical if and only if $(G,w)$ is crossing-critical.

In view of this equivalence (for crossing number purposes) between multigraphs and weighted graphs, it follows that Theorem~\ref{maintheorem} is equivalent to the following:

\begin{thm}[{\bf Equivalent to Theorem~\ref{maintheorem}}]\label{maintheorem2}
Let $G$ be a near-planar simple graph, with an edge $uv$ such that $G-uv$ is a cubic polyhedral graph. Suppose that $G -\{u,v\}$ is internally $3$-connected. Then there exists a positive integer weight assignment $w$ such that $(G,w)$ is crossing-critical.
\end{thm}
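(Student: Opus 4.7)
My plan is to construct $w$ in two stages and then verify crossing-criticality. In the first stage I apply Lemma~\ref{thelemma} to the planar dual $(G-\{u,v\})^*$. Since $G-\{u,v\}$ is internally $3$-connected---hence $2$-connected and bridgeless---the dual is $2$-connected and loopless. In the planar embedding of $G-\{u,v\}$ inherited from the unique polyhedral embedding of $G-uv$, the three faces of $G-uv$ at $u$ merge into one face $F_u$ of $G-\{u,v\}$, and analogously into a face $F_v$ at $v$; these faces are distinct because, as $G$ is nonplanar, $u$ and $v$ share no face of $G-uv$. Lemma~\ref{thelemma} then gives positive integer weights $w^*$ on the dual edges of $G-\{u,v\}$ with every dual edge on a shortest $F_u^*F_v^*$-path, which I pull back to weights on the primal edges of $G-\{u,v\}$; set $D^* = d_{w^*}(F_u^*, F_v^*)$.

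In the second stage I assign weight $M$ to $uv$ and weight $W_e$ to each of the six edges $e$ of $G-uv$ incident to $u$ or $v$, with $M$ and the $W_e$'s chosen below. The ``standard'' drawing---$G-uv$ in its unique planar embedding, with $uv$ routed along a shortest dual path---has weighted crossing number $MD^*$. For the matching lower bound I would take an arbitrary drawing of $(G,w)$, remove one edge from each crossing pair internal to $G-uv$ to produce a planar residual, then apply planar duality to the residual to lower-bound the weighted $uv$-crossings. Once $\cro(G,w) = MD^*$, crossing-criticality of $uv$ and of every edge $e$ of $G-\{u,v\}$ is immediate: decreasing $w(uv)$ by $1$ drops the standard drawing's cost by $D^*$, while decreasing $w(e)$ by $1$ reduces the shortest dual distance by $1$ (by the conclusion of Lemma~\ref{thelemma}), and hence $\cro(G,w)$ by $M$.

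The main obstacle is the six edges of $G-uv$ incident to $u$ or $v$. Such an edge $e$ shares an endpoint with $uv$ and so never crosses it, and in the standard planar drawing of $G-uv$ it crosses nothing; therefore $e$ can be crossing-critical only if some \emph{alternative} optimal drawing has a self-crossing of $G-uv$ involving $e$. I would build this alternative by rerouting $e$ across a carefully chosen edge $e'_e$ of $G-uv$, using the internal $3$-connectedness of $G-\{u,v\}$ to guarantee that the resulting non-planar embedding places $u$ and $v$ on a common face, so that $uv$ can be drawn with $D^*-1$ crossings. The alternative then has cost $W_e\cdot w(e'_e)+M(D^*-1)$, which I tune to equal $MD^*$ by setting $W_e\cdot w(e'_e)=M$ and taking $M$ to be a common multiple of the $w(e'_e)$'s so that the $W_e$'s are positive integers. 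The delicate point---and the technical heart of the proof---is choosing $M$ small enough to exclude drawings with two or more intra-$(G-uv)$ crossings being strictly cheaper than $MD^*$, yet large enough that the alternative drawings achieve their predicted cost; once both drawings are tied at the optimum, decreasing $W_e$ strictly decreases the alternative's cost and hence $\cro(G,w)$, giving crossing-criticality of $e$.
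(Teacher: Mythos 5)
Your first stage is sound and coincides with the paper's: apply Lemma~\ref{thelemma} to the dual of $G-\{u,v\}$ to get balanced weights, give $uv$ a large weight $M$, observe that the standard drawing costs $M\cdot D^*$, and deduce criticality of $uv$ and of every edge of $G-\{u,v\}$ from the fact that each dual edge lies on a shortest $F_uF_v$-path. The genuine gap is exactly where you locate the ``technical heart'': the six edges $uu_1,uu_2,uu_3,vv_1,vv_2,vv_3$. Your rerouting scheme treats each such edge in isolation, and that cannot work. If you move $u$ out of $F_u$ into an adjacent face $F'$ in order to force $uu_1$ to cross some edge $e'_e$, then \emph{all three} edges $uu_1,uu_2,uu_3$ must be re-drawn from $F'$ to $u_1,u_2,u_3$, and each of them may pick up crossings; your cost accounting $W_e\cdot w(e'_e)+M(D^*-1)$ charges only for one of them. (Two further slips in the same passage: relocating $u$ does not put $u$ and $v$ ``on a common face'' --- that would make $uv$ crossing-free and collapse the crossing number --- and crossing $e'_e$ reduces the dual distance by $w(e'_e)$, not by $1$, so your balancing equation $W_e\cdot w(e'_e)=M$ forces $W_e=M$ once corrected, which is not compatible with the rest of your scheme.)

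The same omission undermines your lower bound $\cro(G,w)\ge M D^*$: in an arbitrary optimal drawing, $u$ may sit in any face $F'$, and then $uv$ needs only about $M\cdot d(F',F_v)$ crossings; the deficit $M\cdot d(F_u,F')$ must be recovered from the crossings of $uu_1,uu_2,uu_3$ \emph{jointly}. This is why the paper imposes, for every face $F$, the inequality $\sum_i d(u_i,F)\,\omega(uu_i)\ge d(F_u,F)\cdot\omega(uv)$ (and its analogue at $v$), together with, for each $i$, a face $U_i$ with $d(u_i,U_i)>0$ at which equality holds --- the equality case supplies an alternative optimal drawing in which $uu_i$ is genuinely crossed, and hence its criticality. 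Producing weights $\omega(uu_1),\omega(uu_2),\omega(uu_3)$ satisfying this whole system simultaneously is the content of the paper's Claim~I (a small polyhedral argument: one intersects facets of the feasible region of the linear system indexed by the faces of $G-\{u,v\}$ to find a rational point lying on suitable bounding planes). Your proposal contains no substitute for this step, and a single equation per edge will not yield it; you would also eventually need something like the paper's condition bounding $\omega(uu_i)\omega(vv_j)$ to control the crossings among the six relocated edges themselves. To repair the proof you should replace your per-edge rerouting by this simultaneous linear-programming choice of the six weights.
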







For the rest of this paper:

\begin{itemize}
\item we let
$
G_{u,v}:= G-\{u,v\}
$; and
\item we refer to the hypotheses that $G-uv$ is
$3$-connected and $G_{u,v}$ is internally $3$-connected 
simply as the {\em connectivity assumptions} on $G-uv$ and
$G_{u,v}$, respectively.
\end{itemize}

\section{Some facts on $G_{u,v}$ and its dual $G^*_{u,v}$}

Before moving on to the proof of Theorem~\ref{prooftheorem}, we
establish some facts on the graph $G_{u,v}$.

\subsection{Remarks on the vertices incident with $u$ and $v$}

Let $u_1, u_2,$ and $u_3$ be the vertices of $G$ (other than
$v$) adjacent to $u$. Analogously, 
Let $v_1, v_2,$ and $v_3$ be the  vertices of $G$ (other than
$u$) adjacent to $v$.

We start by noting that $u_1, u_2, u_3, v_1, v_2, v_3$ are all
distinct. First of all, if $i\neq j$, then since $G$ is simple it
follows that $u_i\neq u_j$. Now suppose that $u_i=v_j$ for some $i,j$,
and consider an embedding of $G-uv$ in the plane. It is easy to see
that since $u_i=v_j$, and $G-uv$ is cubic, it follows that $uv$ can be
added to the embedding of $G-uv$ without introducing any crossings,
resulting in an embedding of $G$. This contradicts the nonplanarity of
$G$.  Thus $u_i\neq v_j$ for all $i,j\in \{1,2,3\}$, completing the
proof that $u_1, u_2, u_3, v_1, v_2, v_3$ are all distinct.

%
\subsection{
The embeddings of $G-uv$ and $G_{u,v}$
}
\bigskip

We note that the connectivity assumptions on $G-uv$ and $\gde$ imply that these two graphs admit unique (up to homeomorphism) embeddings in the plane.  This allows us, for the rest of the proof, to regard these as graphs embedded in the plane.

Since $\gde$ is a subgraph of $G-uv$, it follows that we may assume
that the restriction of the embedding of $G-uv$ to $\gde$ is precisely
the embedding of $\gde$.  Conversely, to obtain the embedding of
$G-uv$, we may start with the embedding of $\gde$; then we find the
(unique) face $F_u$ incident with $u_1, u_2$, and $u_3$, and
draw $u u_1, u u_2$, and $u u_3$ (and, of course, $u$) inside $F_u$;
and similarly
find the
(unique) face $F_v$ incident with $v_1, v_2$, and $v_3$, and
draw $v v_1, v v_2$, and $v v_3$ (and, of course, $v$) inside $F_v$.
Note that $F_u\neq F_v$, as otherwise the edge $uv$
could be added to the embedding of $G-uv$ without introducing any
crossings, resulting in an embedding of $G$, contradicting its
nonplanarity.

\subsection{Weight assignments on the dual $\gd$ of $\gde$}

We shall make extensive use of weight assignments on the dual (embedded
graph) $\gd$ of $\gde$. We start by noting that $\gd$ is well-defined
(and admits a unique plane embedding) since $\gde$ admits a unique
plane embedding.  As with $G-uv$ and $\gde$, this allows us to
unambiguously regard $\gd$ for the rest of the proof as an embedded
graph. We shall let
$\ff$ denote the set of all faces in $\gde$ (equivalently, the set of
all vertices of $\gd$). 

A weight assignment $\wuv$ on $\gde$ naturally induces a weight 
assignment $\wuvd$ on $\gd$, and vice versa:
if $e$ is an edge of $\gde$ and $e^*$ is its dual edge in $\gd$, then
we simply let $\wuvd(e^*)= \wuv(e)$. Trivially, a weight assignment
$\w$ on the whole graph $G$ also naturally induces a weight assignment
$\wuvd$ on $\gd$: it suffices to consider the restriction $\wuv$ of
$\w$ to $\gde$, and from this we obtain $\wuvd$ as we just
described. 

If $\wuvd$ is a weight assignment on $\gd$, then for $F,F'\in\ff$
we let $d_{\wuvd}(F,F')$ denote the length of a shortest $FF'$-path in $\gd$
under  $\wuvd$. We  call $d_{\wuvd}(F,F')$ the {\em distance}
between $F$ and $F'$ under $\wuvd$. 


Now since for $i=1,2,3$ the vertex $u_i$ has degree $2$ in
$\gde$, it follows that $u_i$ is incident with exactly two faces in
$\gd$, one of which is $F_u$; let $F_{u_i}$ denote the other face. Thus it
makes sense to define the {\em distance} $d_{\wuvd}(u_i,F)$ between $u_i$
and any face $F\in\ff$ as
$\min\{d_\wuvd(F_u,F),d_\wuvd(F_{u_i},F)\}$.  We define $F_{v_i}$ and
$d_{\wuvd}(v_i,F)$ analogously, for $i=1,2,3$.

The connectivity assumption on $\gde$ ensures that  $F_{u_1}, F_{u_2}$ and
$F_{u_3}$ are pairwise distinct. Similarly,  $F_{v_1}, F_{v_2}$ and
$F_{v_3}$ are pairwise distinct. 
Note that maybe $F_{u_i} = F_v$ for some $i\in\{1,
2, 3\}$, or $F_{v_j} = F_u$ for some $j\in\{1, 2,
3\}$.

Finally, we say that a weight assignment $\wuvd$ on $\gd$ is {\em balanced} if each edge $e^*$ of $\gd$ belongs to a shortest $F_uF_v$-path in $(\gd,\wuvd)$.




\section{Proof of Theorem~\ref{maintheorem2}}\label{prooftheorem}

First we show (Proposition~\ref{pro:ifthere}) that if there exists a
weight assignment $\omega$ on $G$ with certain properties, then
$(G,\omega)$ is crossing-critical. The existence of a weight
assignment with these properties is established in 
Proposition~\ref{pro:thereis}, and so Theorem~\ref{maintheorem2}
immediately follows.

\begin{proposition}\label{pro:ifthere}
Suppose that $\omega$ is a positive integer weight assignment on $G$ with the following
properties:
\begin{description}
\item{(1)} The induced weight assignment $\omega^*$ on $\gd$ is balanced.
\item{(2)} For every pair of edges $e,e'$ of $\gde$, 
$\omega(e)  \omega(e') > d_{\omega^*}(F_u,F_v)  \cdot \omega(uv)$.
\item{(3)} $d_{\omega^*}(u_1,F)\cdot\omega(uu_1) +
  d_{\omega^*}(u_2,F)\cdot\omega(uu_2) +
  d_{\omega^*}(u_3,F)\cdot\omega(uu_3) \ge  d_{\omega^*}(F_u,F)\cdot \omega(uv)$, for
every $F\in\ff$.
\item{(4)} For each $i=1, 2, 3$, there is a face
  $U_i\in\ff$ such that $d_{\omega^*}(u_i,U_i)>0$ and
  $d_{\omega^*}(u_1,U_i)\cdot \omega(uu_1)  +  d_{\omega^*}(u_2,U_i)
  \cdot \omega(uu_2) +
d_{\omega^*}(u_3, U_i) \cdot \omega(uu_3) = d_{\omega^*}(F_u,U_i)
\cdot \omega(uv)$.
\item{(5)} $d_{\omega^*}(v_1,F)\cdot\omega(vv_1) +
  d_{\omega^*}(v_2,F)\cdot\omega(vv_2) +
  d_{\omega^*}(v_3,F)\cdot\omega(vv_3) \ge d_{\omega^*}(F_v,F)\cdot \omega(uv)$, for
every $F\in\ff$.
\item{(6)} For each $i=1, 2, 3$, there is a face
  $V_i\in\ff$ such that $d_{\omega^*}(v_i,V_i)>0$ and
  $ d_{\omega^*}(v_1,V_i)\cdot \omega(vv_1)  + d_{\omega^*}(v_2,V_i)
  \cdot \omega(vv_2) +
d_{\omega^*}(v_3, V_i)\cdot \omega(vv_3) = d_{\omega^*}(F_v,V_i)\cdot \omega(uv)$.
\item{(7)} For all $i,j\in\{1,2,3\}$,
$
\omega(uu_i) \cdot \omega(vv_j) < (1/9) \min\{\ \omega(e)      \  | \ e \in E(\gde)\}. 
$
\end{description}
Then $(G,\omega)$ is crossing-critical.
\end{proposition}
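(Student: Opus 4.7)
Set $L:=d_{\omega^*}(F_u,F_v)\cdot\omega(uv)$. My plan is to first show $\cro(G,\omega)=L$ by matching upper and lower bounds, and then verify crossing-criticality one class of edges at a time. For the upper bound, draw $\gde$ in its unique planar embedding, place $u$ inside $F_u$ and $v$ inside $F_v$, draw each $uu_k$ inside $F_u$ and each $vv_l$ inside $F_v$ (possible without crossings since the $u_k,v_l$ lie on the respective boundaries), and route $uv$ along a shortest $F_uF_v$-path in $\gd$; the weighted crossings total exactly $L$. For the lower bound, let $D$ be an optimal drawing: by (2) any $\gde$-$\gde$ crossing alone would already exceed $L$, so $\gde$ is drawn crossing-free, and by its internal $3$-connectivity the drawing is its unique planar embedding. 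If $F,F'$ are the faces of that embedding containing $u,v$, each of the seven edges $uu_k,\,vv_l,\,uv$ must cross $\gde$-edges of weight at least the appropriate dual distance, and summing these disjoint contributions, applying (3) and (5), and then the triangle inequality in $\gd$, yields $\text{cr}(D,\omega)\ge\omega(uv)\bigl(d_{\omega^*}(F_u,F)+d_{\omega^*}(F,F')+d_{\omega^*}(F',F_v)\bigr)\ge L$.

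With $\cro(G,\omega)=L$ in hand I check $\cro(G,\omega_e)<L$ for every edge $e$. For $e=uv$, the upper-bound drawing evaluated under $\omega_{uv}$ costs $L-d_{\omega^*}(F_u,F_v)<L$. For an edge $e$ of $\gde$, condition (1) (balancedness) puts $e^*$ on some shortest $F_uF_v$-path, so I route $uv$ through $e$; under $\omega_e$ the same drawing costs $L-\omega(uv)<L$. The delicate case is $e=uu_i$ (with $vv_j$ symmetric via (6) and $V_j$). I use (4) to pick $U_i$ and consider the drawing obtained by placing $u$ in $U_i$ instead of $F_u$, keeping $v$ in $F_v$, with each incident edge routed along a shortest dual path. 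The key observation is that (1) actually implies every vertex of $\gd$ lies on a shortest $F_uF_v$-path, since every face of $\gde$ carries some dual edge and that edge is on such a path by balancedness; hence $d_{\omega^*}(F_u,U_i)+d_{\omega^*}(U_i,F_v)=d_{\omega^*}(F_u,F_v)$, which combined with the equality in (4) makes the $\gde$-crossings of the relocated drawing contribute exactly $L$.

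The main obstacle is that this relocation may force unavoidable crossings among the six edges $uu_k,vv_l$ that I have not yet accounted for, and condition (7) is precisely the hypothesis that controls them. There are only nine pairs $(k,l)\in\{1,2,3\}^2$, each contributing weight product strictly less than $(1/9)\min_{e\in E(\gde)}\omega(e)$, and I can route so that each pair crosses at most once; therefore the total extra cost $\varepsilon$ satisfies $\varepsilon<\min_{e\in E(\gde)}\omega(e)$. This minimum is in turn at most $d_{\omega^*}(u_i,U_i)$, because $d_{\omega^*}(u_i,U_i)>0$ forces the shortest dual path used in its definition to traverse at least one edge of $\gd$. Evaluated under $\omega_{uu_i}$, the relocated drawing saves exactly $d_{\omega^*}(u_i,U_i)$ from the $uu_i$-$\gde$ crossings, so its cost is at most $L+\varepsilon-d_{\omega^*}(u_i,U_i)<L=\cro(G,\omega)$, giving $\cro(G,\omega_{uu_i})<\cro(G,\omega)$. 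The symmetric argument with (6) and $V_j$ handles $vv_j$ and completes the proof.
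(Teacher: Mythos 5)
Your proposal is correct and follows essentially the same route as the paper's proof: the same upper-bound drawing, the same lower bound via (2), (3), (5) and the triangle inequality in $\gd$, the same use of balancedness to handle the edges of $\gde$ and $uv$, and the same relocation of $u$ to $U_i$ controlled by (4) and (7) for the edges $uu_i$, $vv_j$. Your explicit observation that balancedness forces every vertex of $\gd$ onto a shortest $F_uF_v$-path (hence $d_{\omega^*}(F_u,U_i)+d_{\omega^*}(U_i,F_v)=d_{\omega^*}(F_u,F_v)$) is a welcome clarification of a step the paper uses only implicitly.
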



\begin{proof}

Throughout the proof, for brevity we let $t:=\omega(uv)$. 

To help comprehension, we break the proof into several steps.

\bigskip
\noindent{(A) } {\em $\cro(G,\omega) \le t\cdot d_{\omega^*}(F_u,F_v)$. 
}
\bigskip

Start with the (unique) embedding of $G-uv$, and draw $uv$ following a
shortest $F_uF_v$-path in  $(\gd,\omega^*)$.
Then the sum of the weights of the edges
crossed by $uv$ equals the total weight of the shortest $F_uF_v$-path, that is, $d_{\omega^*}(F_u,F_v)$ (here we
use the elementary, easy to check fact that crossings between adjacent
edges can always be avoided; in this case, we may draw $uv$ so that it
crosses no edge adjacent to $u$ or $v$). Since $\omega(uv)=t$, it follows
that such a drawing of $(G,\omega)$ has exactly 
$t\cdot d_{\omega^*}(F_u,F_v)$.
crossings. 

\bigskip
\noindent{(B) } {\em 
$\cro(G,\omega) = t\cdot d_{\omega^*}(F_u,F_v)$.
}
\bigskip

Consider a crossing-minimal drawing $\dd$ of $(G,\omega)$. An immediate consequence of (2) and
(A) is that the drawing of $\gde$ induced by $\dd$ is an
embedding (that is, no two edges of $\gde$ cross each other in
$\dd$). 

Now let $F'$ (respectively, $F''$) denote the face of $\gde$ in which
$u$ (respectively, $v$) is drawn in $\dd$. Clearly, for $i=1,2,3$ the
edge $uu_i$ contributes in at least 
$\omega(uu_i)\cdot d_{\omega^*}(u_i,F')$ crossings. Analogously, 
for $i=1,2,3$ the
edge $vv_i$ contributes in at least 
$\omega(vv_i)\cdot d_{\omega^*}(v_i,F'')$ crossings. Thus it follows
  from (3) and (5) that the edges in $\{uu_1, uu_2, uu_3, vv_1,
  vv_2, vv_3\}$ contribute in at least 
$t\cdot d_{\omega^*}(F_u,F') + t\cdot d_{\omega^*}(F_v,F'') = 
t\cdot (d_{\omega^*}(F_u,F') + d_{\omega^*}(F_v,F''))$ crossings. On
the other hand, since the ends $u,v$ of $uv$ are in faces $F'$ and
$F''$, it follows that edge $uv$ contributes in at least $t\cdot d_{\omega^*}(F',F'')$ crossings. We conclude that
$\dd$ has at least $t\cdot 
\bigl( d_{\omega^*}(F_u,F') + d_{\omega^*}(F_v,F'') + d_{\omega^*}(F',F'')\bigr)$.
Elementary triangle inequality arguments show that
$d_{\omega^*}(F_u,F') + d_{\omega^*}(F_v,F'') + d_{\omega^*}(F',F'')
\ge d_{\omega^*}(F_u,F_v)$, and so
$\dd$ has at least  $t\cdot d_{\omega^*}(F_u,F_v)$ crossings. 
Thus $\cro(G,\omega) \ge t\cdot d_{\omega^*}(F_u,F_v)$.  The reverse
inequality is given in (A), and so (B) follows.

\bigskip
\noindent{(C) } {\em Crossing-criticality of the edges in $\gde$ and
  of the edge $uv$.}
\bigskip

Let $e$ be any edge in $\gde$. 
We proceed similarly as in (A). Start with the (unique) embedding of
$G-uv$, and draw $uv$ following a shortest $F_uF_v$-path in
$(\gd,\omega^*)$ that includes $e^*$ (the existence of such a path is
guaranteed by the balancedness of $\omega^*$). This yields a drawing of
$(G,\omega)$ with exactly
$t\cdot d_{\omega^*}(F_u,F_v)$ crossings, in which $e$ and $uv$ cross each
other. Since $\cro(G,\omega) = 
t\cdot d_{\omega^*}(F_u,F_v)$, it follows that $e$ and $uv$ are both
crossed in a crossing-minimal drawing of $(G,\omega)$. Therefore both
$e$ and $uv$ are crossing-critical in $(G,\omega)$. 

\bigskip
\noindent{(D) } {\em Crossing-criticality of the edges $uu_1, uu_2,
  uu_3, vv_1, vv_2$, and $vv_3$.}
\bigskip

We prove the criticality of $uu_1$; the proof of the criticality of the other edges is
totally analogous. 

Consider the (unique) embedding of $\gde$. Put $u$ in face $U_i$
(see property (4)) and $v$ in face $F_v$. Then  draw $uu_j$, for
$j=1,2,3$, adding $\omega(uu_j)\cdot d_{\omega^*} (u_j,U_1)$
crossings with the edges in $\gde$. Since crossings between adjacent
edges can always be avoided, it follows that $uu_1, uu_2, uu_3$ get drawn
by adding $
\omega(uu_1)\cdot d_{\omega^*} (u_1,U_1) 
+
\omega(uu_2)\cdot d_{\omega^*} (u_2,U_1) 
+
\omega(uu_3)\cdot d_{\omega^*} (u_3,U_1) 
= t\cdot d_{\omega^*}(F_u,U_1)
$ 
crossings (using (4)).
Finally we draw $vv_1, vv_2, vv_3$ in face $F_v$. Now this last step
may add crossings, but only of the edges $vv_1, vv_2, vv_3$ with the
edges $uu_1, uu_2, uu_3$.  In view of (7), the last step added fewer than
$9\cdot(1/9) \min\{\ \omega(e)      \  | \ e \in E(\gde)\}
= \min\{\ \omega(e)      \  | \ e \in E(\gde)\}$ crossings.
 We finally draw $uv$; since $u$ is in face $U_1$ and
$v$ is in face $F_v$, it follows that $uv$ can be drawn by adding
$t \cdot d_{\omega^*}(U_1,F_v)$ 
crossings. 

The described drawing $\dd$ of $G$ has then fewer than $t\cdot
d_{\omega^*}(F_u,U_1) + t \cdot d_{\omega^*}(U_1,F_v) +
\min\{\ \omega(e) \ | \ e \in E(\gde)\} = t\cdot d_{\omega^*}(F_u,F_v)
+ \min\{\ \omega(e) \ | \ e \in E(\gde)\} = \cro(G,\omega) +
\min\{\ \omega(e) \ | \ e \in E(\gde)\}$ crossings, where for the
first equality we used the balancedness of $\omega^*$, and for the
second equality we used (B).  Thus $\cro(\dd) < \cro(G,\omega) +
\min\{\ \omega(e) \ | \ e \in E(\gde)\}$.

In $\dd$, the edge $uu_1$ contributes in 
$\omega(uu_1)\cdot d_{\omega^*} (u_1,U_1)$ crossings; note that (4)
implies that $\omega(uu_1)\cdot d_{\omega^*} (u_1,U_1)>0$.  
Since obviously $d_{\omega^*} (u_1,U_i)\ge \min\{\omega(e)      \  |
\ e \in E(\gde)\}$, 
it follows that $uu_1$ contributes in 
at least $\min\{\omega(e)      \  | \ e \in E(\gde)\}$
crossings. Thus, 
if we remove $uu_1$ we obtain a drawing of $G-uu_1$ with 
fewer than $\cro(G,\omega)$ crossings. Therefore
$uu_1$ is critical
in $(G,\omega)$, as claimed. 
\end{proof}


\begin{proposition}\label{pro:thereis}
There exists a positive integer weight assignment $\omega$ on $G$ that satisfies
(1)--(7) in Proposition~\ref{pro:ifthere}
\end{proposition}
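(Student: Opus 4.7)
My plan is to construct $\omega$ in three stages, each handling a different block of the seven conditions.

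\emph{Stage 1 (condition (1)).} Apply Lemma~\ref{thelemma} to $\gd$ with the distinguished pair $F_u, F_v$. The hypothesis that $\gde$ is internally $3$-connected ensures $\gd$ is loopless (since $\gde$ has no bridges) and $2$-connected, so Lemma~\ref{thelemma} produces a positive integer weight $\muuvd$ on $\gd$ under which every dual edge lies on a shortest $F_u F_v$-path. Dually, this gives a positive integer weight $\muuv$ on $E(\gde)$. Setting $\omega$ to equal $\muuv$ on $E(\gde)$ immediately yields (1).

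\emph{Stage 2 (conditions (3)--(6)).} With the dual distances $d_{\muuvd}$ now fixed, conditions (3)--(4) form a system of linear constraints on the four variables $\omega(uu_1), \omega(uu_2), \omega(uu_3), \omega(uv)$: one nonnegativity-type inequality per face $F\in\ff$, together with three tightness equations (one per $i\in\{1,2,3\}$) at faces $U_i$ carrying the side constraint $d_{\muuvd}(u_i,U_i) > 0$. My plan is to locate three suitable faces $U_1, U_2, U_3$, natural candidates being faces of the form $F_{u_j}$ with $j \ne i$ (since $F_{u_1}, F_{u_2}, F_{u_3}$ are pairwise distinct and $d_{\muuvd}(u_j, F_{u_j})=0$, so $d_{\muuvd}(u_i, F_{u_j}) > 0$), such that the induced $3 \times 3$ linear system has a positive rational solution whose associated inequality (3) holds at every face. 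Clearing denominators then yields integer values. The $v$-side of (5)--(6) is treated symmetrically, and a final common rescaling aligns $\omega(uv)$ across the two sides.

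\emph{Stage 3 (conditions (2) and (7)).} Multiply $\muuv$ on $\gde$ by a large positive integer $N$. Conditions (1) and (3)--(6) are homogeneous in the weights on $\gde$: all distances $d_{\muuvd}$ scale by $N$ while $\omega(uu_i), \omega(vv_j), \omega(uv)$ remain fixed, so both sides of every inequality scale uniformly. Hence (1)--(6) continue to hold. Condition (7), $\omega(uu_i)\omega(vv_j) < (1/9) \min_{e \in E(\gde)} \omega(e)$, holds for all sufficiently large $N$ since its right-hand side grows linearly in $N$ and its left-hand side is fixed. Condition (2), $\omega(e)\omega(e') > d_{\omega^*}(F_u, F_v) \cdot \omega(uv)$, holds because its left-hand side scales as $N^2$ whereas its right-hand side scales as $N$.

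The principal difficulty is clearly Stage 2: exhibiting faces $U_1, U_2, U_3$ (and $V_1, V_2, V_3$) for which the associated linear system has a positive solution whose corresponding global inequality (3) (resp.\ (5)) is satisfied across all of $\ff$. Even with the $U_i = F_{u_j}$-type candidates, positivity of the solution given by Cramer's rule is not automatic (some signed determinants in the numerators need not be positive), and verifying the global inequality from tightness at only three faces requires nontrivial work. I expect this step to hinge essentially on the cubic polyhedral structure of $G-uv$ and on the specific planar arrangement of $F_u$ relative to $F_{u_1}, F_{u_2}, F_{u_3}$ in the embedding of $\gde$.
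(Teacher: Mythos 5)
Your Stages 1 and 3 match the paper: the balanced assignment on $\gd$ comes from Lemma~\ref{thelemma}, and the paper likewise finishes by scaling the weights on $\gde$ by a large constant $c$ to secure (2) and (7) while (1) and (3)--(6) are preserved because both sides of those constraints scale linearly with the dual distances. The problem is Stage 2, which you yourself flag as "the principal difficulty" and leave unresolved: the existence of a positive rational point $(x_1,x_2,x_3)$ satisfying $d_{\mu^*}(u_1,F)x_1+d_{\mu^*}(u_2,F)x_2+d_{\mu^*}(u_3,F)x_3\ge d_{\mu^*}(F_u,F)$ for \emph{every} face $F$, with equality at three faces $U_i$ having $d_{\mu^*}(u_i,U_i)>0$, is exactly the content of the paper's Claim~I, and without it the proposition is not proved. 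Moreover, your proposed mechanism --- pick three faces, solve the resulting $3\times 3$ system by Cramer's rule, then verify positivity and global feasibility --- is not what works, and your own worries about it (non-positive signed determinants, no control over the remaining inequalities) are well founded.

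The paper's actual argument avoids solving an exact system. The key observation is that $d_{\mu^*}(u_i,F_{u_i})=0$, so the constraint for $F_{u_i}$ is the unique one with zero $x_i$-coefficient, and (since $F_{u_1},F_{u_2},F_{u_3}$ are pairwise distinct) all its other coefficients and its right-hand side are strictly positive. One then takes a positive rational point on the intersection of the planes for $F_{u_1}$ and $F_{u_2}$, and slides it along the ray parallel to the $x_1$-axis: this motion stays on the plane of $F_{u_1}$ (whose $x_1$-coefficient is zero) and eventually enters the feasible region of every other constraint; stopping at the \emph{last} constraint plane $I$ crossed yields a point that is feasible for all of $\ii$, tight at $F_{u_1}$ (which serves as both $U_2$ and $U_3$, since $d_{\mu^*}(u_2,F_{u_1})>0$ and $d_{\mu^*}(u_3,F_{u_1})>0$), and tight at the face of $I$ (which serves as $U_1$, with $d_{\mu^*}(u_1,U_1)>0$ because $I$ has nonzero $x_1$-coefficient). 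Note in particular that the $U_i$ need not be three distinct faces and need not all be of the form $F_{u_j}$; "start feasible-in-the-limit and push to the boundary along a coordinate direction" replaces "solve for tightness, then check feasibility." Without this (or an equivalent) argument, your proof has a genuine gap at its central step.
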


\begin{proof}
We start with a balanced positive integer weight assignment $\mu^*$ on
$\gd$. The  
existence of such a $\mu^*$
is guaranteed from Lemma~\ref{thelemma}, which
applies since the connectivity assumption on $\gde$ implies that $\gd$
is also $3$-connected. Let $\mu$ denote the (positive integer) weight
assignment naturally induced on $\gde$. 


\vglue 0.5 cm
\noindent{\bf Claim I. } {\em There exists a rational point
  $(x_1,x_2,x_3)$ such that, for every $F\in\ff\setminus\{F_u\}$,
$$d_{\mu^*}(u_1,F)x_1 + d_{\mu^*}(u_2,F)x_2 + d_{\mu^*}(u_3,F)x_3 \ge
d_{\mu^*}(F_u,F).$$
Moreover, there exist faces $U_1, U_2, U_3$ of $\gde$, such that
  $d_{\mu^*}(u_i,U_i) >0$ and, for $i=1,2,3$, 
$$d_{\mu^*}(u_1,U_i)x_1 +
d_{\mu^*}(u_2,U_i)x_2 +
d_{\mu^*}(u_3,U_i)x_3 = d_{\mu^*}(F_u,U_i).$$
} 
\vglue 0.5 cm

\begin{proof}
Let $\ii$ be the system of inequalities 
$\ii:=\{d_{\mu^*}(u_1,F)x_1 + d_{\mu^*}(u_2,F)x_2 + d_{\mu^*}(u_3,F)x_3 \ge
d_{\mu^*}(F_u,F)\ | \ f\in \ff\setminus\{F_u\}\}$, and let 
$\Lambda$ denote the set of all triples $(x_1,x_2,x_3)$ that satisfy
all inequalities in $\ii$.
Clearly, if $x_1, x_2$, and $x_3$ are all large enough
then $(x_1, x_2, x_3)$ is in $\Lambda$. Thus $\Lambda$ is a nonempty
convex polyhedron in $\real^3$.

Each inequality $J$ in $\ii$ naturally defines a plane in $\real^3$,
namely the plane obtained by substituting $\ge$ with $=$. We call this
the {\em plane associated to} $J$.

Since each $u_i$ is incident with face $F_{u_i}$, it follows that 
$d_{\mu^*}(u_1,F_{u_1}) = d_{\mu^*}(u_2,F_{u_2}) =
d_{\mu^*}(u_3,F_{u_3}) = 0$,  Therefore the inequalities corresponding to $F_{u_1},
F_{u_2}$ and $F_{u_3}$, respectively, define the following system
$\Gamma$:

\begin{tabular}{cccccccr}\label{tab:tab}
 & & $d_{\mu^*}(u_2,F_{u_1})x_2$ & $+$ & $d_{\mu^*}(u_3,F_{u_1})x_3$ &
  $\geq$ & $d_{\mu^*}(F_u,F_{u_1})$ & ($\Gamma 1$)\\ \label{linsysu1} \nonumber 
$d_{\mu^*}(u_1,F_{u_2})x_1$ &  &  & $+$ & $d_{\mu^*}(u_3,F_{u_2})x_3$ & $\geq$ &
  $d_{\mu^*}(F_u,F_{u_2})$ & ($\Gamma 2$)\\ 
$d_{\mu^*}(u_1,F_{u_3})x_1$ & $+$ & $d_{\mu^*}(u_2, F_{u_3})x_2$ &  &  & $\geq$ &
  $d_{\mu^*}(F_u,F_{u_3})$ & ($\Gamma 3$)\\  \label{linsysu3} \nonumber 
\end{tabular}

\def\gone{{($\Gamma 1)$}}
\def\gtwo{{($\Gamma 2)$}}
\def\gthree{{($\Gamma 3)$}}

\noindent where all coefficients  (since the faces $F_{u_1}, F_{u_2},
F_{u_3}$ are pairwise distinct) and all the right-hand sides are
strictly positive integers. We refer to this as the {\em positive integrality} property
of \gone, \gtwo, and \gthree.

This positive integrality property implies that the planes associated to
\gone\ and
\gtwo\ intersect in a line whose intersection with the positive octant
is a (full one-dimensional) segment. In
particular, there is a point $(a_1,a_2,a_3)$, all of whose points are
positive and rational, and that lies on the intersection of the planes
associated to \gone\ and \gtwo. 

\def\or{{r^{\rightarrow}}}

Now consider the set of points $\or:=\{(x_1, a_2, a_3)\ | \ x_1 \ge 0\}$. Thus $\or$ is a ray starting at $(0, a_2, a_3)$ and
parallel to the $x_1$-axis, lying on the plane associated to \gone. Now
since \gone\ is the only inequality in $\ii$ whose $x_1$ coefficient is
zero, it follows that for every large enough $x_1$, the point $(x_1,
a_2, a_3)$ is in $\Lambda$. Now every inequality in $\ii$ distinct
from \gone\ intersects $\or$ in at most one point.  Let $a_1'$ be
largest possible such that $(a_1',a_2,a_3)$ is the intersection of
$\or$ with a plane associated to an inequality $I$ in
$\ii\setminus\{$\gone$\}$; since $a_1$ is the intersection of \gtwo\
with \gone, it follows that $a_1'$ is well-defined. 

Since $a_1',a_2,a_3$ all arise from the intersection of planes with
integer coefficients, it follows that they are all rational
numbers. We claim that the rational point $(a_1',a_2,a_3)$ satisfies
the requirements in the Claim.

Consider any inequality $J$ in $\ii$ distinct from \gone, \gtwo, and
\gthree. Thus all the coefficients of $J$ are nonzero, and so the
intersection of the plane associated to $J$ with the positive octant is a
triangle. Let $(j,a_2,a_3)$ be the intersection of this triangle with
the line
$\{(x_1,a_2,a_3)\ | \ x_1\in \real\}$. Then $j \le a_1'$, and so the ray
$\{(x_1,a_2,a_3)\ | \ x_1\in \real, x_1 \ge j\}$ is contained in the
{\em feasible} region of $J$ (that is, the region of $\real^3$ that
consists of those points for which $J$ is satisfied). In particular, $(a_1',a_2,a_3)$ is in the
feasible region of $J$.  A similar 
argument shows that also in the case in which $J$ is either
\gtwo\ or \gthree, then $(a_1',a_2,a_3)$ is in the feasible region of
$J$. This proves the first part of Claim I.

Now we recall that $(a_1',a_2,a_3)$ lies on the plane associated to
\gone, that is,
 $d_{\mu^*}(u_2,F_{u_1})a_2+d_{\mu^*}(u_3,F_{u_1})a_3 =
d_{\mu^*}(F_u,F_{u_1})$. Since $d_{\mu^*}(u_1,F_{u_1})=0$, we have
 $d_{\mu^*}(u_1,F_{u_1})a_1'+d_{\mu^*}(u_2,F_{u_1})a_2+d_{\mu^*}(u_3,F_{u_1})a_3 =
d_{\mu^*}(F_u,F_{u_1})$.  Noting that $d_{\mu^*}(u_2,F_{u_1})>0$ and
$d_{\mu^*}(u_e,F_{u_1})>0$, the second part of Claim I follows for
$i=2$ and $3$ by setting $U_2=U_3=F_{u_1}$. Now let $U_1$ be the face
in $\gg$ associated to inequality $I$. Thus
$d_{\mu^*}(u_1,U_1)a_1'+d_{\mu^*}(u_2,U_1)a_2+d_{\mu^*}(u_3,U_1)a_3 =
d_{\mu^*}(F_u,U_1)$. We recall that \gone\ is the only inequality in
$\ii$ whose $x_1$ coefficient is $0$; since inequality $I$ is distinct from \gone\, it
follows that $d_{\mu^*}(u_1,U_1)>0$. Thus the second part of Claim I
follows for $i=1$ for this choice of $U_1$.
\end{proof}

The proof of the following statement is totally analogous:

\vglue 0.5 cm
\noindent{\bf Claim II. } {\em There exists a rational point
  $(y_1,y_2,y_3)$ such that, for every $F\in\ff\setminus\{F_v\}$,
$$d_{\mu^*}(v_1,F)y_1 + d_{\mu^*}(v_2,F)y_2 + d_{\mu^*}(v_3,F)y_3 \ge
d_{\mu^*}(F_v,F).$$
Moreover, there exist faces $V_1, V_2, V_3$ of $\gde$, such that
  $d_{\mu^*}(v_i,V_i) >0$ and, for $i=1,2,3$, 
$$d_{\mu^*}(v_1,V_i)y_1 +
d_{\mu^*}(v_2,V_i)y_2 +
d_{\mu^*}(v_3,V_i)y_3 = d_{\mu^*}(F_u,V_i).$$
}
\vglue -1.4 cm \hfill$\square$

\vglue 0.8 cm

Let $(p_1/q_1, p_2/q_2, p_3/q_3)$ be a point as in Claim I, and let
$(a_1/b_1, a_2/b_2, a_3/b_3)$ be a point as in Claim II, 
where
all $p_i$s, $q_i$s, $a_i$s, and $b_i$s are integers. Let $M:=q_1q_2q_3b_1b_2b_3$, and let
$r_1:=p_1q_2q_3b_1b_2b_3,
r_2:=p_2q_1q_3b_1b_2b_3$, 
$r_3:=p_3 q_1 q_2b_1b_2b_3$,
$s_1:=a_1b_2b_3q_1q_2q_3,
s_2:=a_2b_1b_3q_1q_2q_3$, and 
$s_3:=a_3 b_1 b_2q_1q_2q_3$.

Then $(r_1, r_2, r_3)$ is a positive integer
solution to the set of inequalities 
$\{d_{\mu^*}(u_1,F)r_1 + d_{\mu^*}(u_2,F)r_2 + d_{\mu^*}(u_3,F)r_3 \ge
M\cdot d_{\mu^*}(F_u,F) : F\in\ff\setminus\{F_u\}\}$, and for each
$i=1,2,3$, we have
$d_{\mu^*}(u_1,U_i)r_1 +
d_{\mu^*}(u_2,U_i)r_2 +
d_{\mu^*}(u_3,U_i)r_3 = M\cdot d_{\mu^*}(F_u,U_i)$. 

Similarly, $(s_1, s_2, s_3)$ is a positive integer
solution to the set of inequalities 
$\{d_{\mu^*}(v_1,F)s_1 + d_{\mu^*}(v_2,F)s_2 + d_{\mu^*}(v_3,F)s_3 \ge
M\cdot d_{\mu^*}(F_v,F) : F\in\ff\setminus\{F_v\}\}$, and for each
$i=1,2,3$, we have
$d_{\mu^*}(v_1,V_i)s_1 +
d_{\mu^*}(v_2,V_i)s_2 +
d_{\mu^*}(v_3,V_i)s_3 = M\cdot d_{\mu^*}(F_v,V_i)$.

Finally, let $c$ be any integer greater than $ M\cdot
d_{\mu^*}(F_u,F_v)/(\min\{\mu(e)\ | \ e\in E(\gde)\})^2 $ and also greater
than $ 9r_i s_j/\min\{\mu(e)\ | \ e\in E(\gde)\}
$,
for all $i,j \in\{1,2,3\}$. 


Define the weight assignment $\omega$ on $G$ as follows: 

\begin{itemize}

\item $\omega(uv) = M$;

\item $\omega(uu_i)=r_i$ and $\omega(vv_i)=s_i$ for $i=1,2,3$;

\item $\omega(e) = c\cdot \mu(e)$, for all edges $e$ in $\gde$. 

\end{itemize}

We claim that $\omega$ (and its induced weight assignment $\omega^*$
on $\gd$) satisfies (1)--(7) in
Proposition~\ref{pro:ifthere}.

To see that $\omega^*$ satisfies (1), it suffices to note that $\omega^*$
inherits the balancedness (when restricted to $\gd$) from $\mu^*$. 

Now let $e,e'$ be edges of $\gde$. Then $
\omega(e)
\omega(e') 
=
c^2\cdot \mu(e)\mu(e') \ge c^2\cdot  (\min\{ 
\mu(f)\ | \ f\in E(\gde)\
\})^2 > c\cdot M \cdot d_{\mu^*}(F_u,F_v) = \omega(uv)(c\cdot
d_{\mu^*}(F_u,F_v)) = \omega(uv)\cdot d_{\omega^*} (F_u,F_v)$.  This
proves (2). 


For (3) and (4), recall that $(r_1, r_2, r_3)$ is a positive integer
solution to the set of inequalities 
$\{d_{\mu^*}(u_1,F)r_1 + d_{\mu^*}(u_2,F)r_2 + d_{\mu^*}(u_3,F)r_3 \ge
M\cdot d_{\mu^*}(F_u,F) : F\in\ff\setminus\{F_u\}\}$, and for each
$i=1,2,3$, we have
$d_{\mu^*}(u_1,U_i)r_1 +
d_{\mu^*}(u_2,U_i)r_2 +
d_{\mu^*}(u_3,U_i)r_3 = M\cdot d_{\mu^*}(F_u,U_i)$. The definition of
$\omega$ (and its induced $\omega^*$) then immediately imply (3) and
(4) (we are using that for any faces $F,F'\in\ff\setminus\{F_u\}$, 
$d_{\omega^*}(F,F') = c\cdot d_{\mu^*}(F,F')$).
The proof that (5) and (6) hold is totally analogous.

Finally, we recall that we defined $c$ so that $ c > 9r_i
s_j/\min\{\mu(e)\ | \ e\in E(\gde) \} $ for all $i,j
\in\{1,2,3\}$. By the definition of $\omega$, this is equivalent to
$ c\cdot \min\{\mu(e) \ | \ e\in E(\gde) \} >
9\omega(uu_i)\omega(vv_j)
$, that is,  
$ \min\{\omega(e) \ | \ e\in E(\gde) \} >
9\omega(uu_i)\omega(vv_j)
$, which is in turn obviously equivalent to (7).
\end{proof}


\section{Concluding Remarks and Open Questions}\label{concludingremarks}

Let $\gg$ be the class of graphs that can be made crossing-critical by
a suitable multiplication of edges.  In this work we have proved that
a large family of graphs is contained in $\gg$ (note that the cubic
condition is only used around vertices $u,v,u_1,u_2, u_3, v_1,v_2$ and
$v_3$; other vertices can have arbitrary degrees).  Which other graphs
belong to $\gg$? Is there any hope of fully characterizing $\gg$?

It is not difficult to prove that we can restrict our attention to
simple graphs: if $\overline{G}$ is a graph with multiple
edges and $G$ is a maximal simple graph contained in $G$, then  
$\oG$ is in $\gg$ if and only if $G$ is in $\gg$.

Jes\'us Lea\~nos has observed that the graph $K_{3,3}^+$ obtained by
adding to $K_{3,3}$ an edge
(between vertices in the same chromatic class) is not in 
$\gg$. 
Following \v{S}ir\'a\v{n}~\cite{siran1,siran2}, an edge $e$ in
a graph $G$ is a {\em Kuratowski edge} if there is a subgraph $H$ of
$G$ that contains $e$ and is homeomorphic to a Kuratowski graph (that is, $K_{3,3}$ or $K_5$). It is trivial to see that the added edge in Lea\~nos's
example is not a Kuratowski edge of $K_{3,3}^+$.  This observation naturally gives
rise to the following.

\begin{conj}\label{con:thecon}
If $G$ is a graph all whose edges are Kuratowski edges, then $G$ can
be made crossing-critical by a suitable multiplication of its edges. 
\end{conj}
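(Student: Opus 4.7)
The plan is to follow the architecture behind the proof of Theorem~\ref{maintheorem2}: translate crossing-criticality into a system of linear (and a few quadratic) inequalities on the edge weights, prove that this system has a positive rational solution, and then integer-scale. For each edge $e\in E(G)$, fix a Kuratowski subgraph $K_e\subseteq G$ with $e\in E(K_e)$; such a $K_e$ exists by hypothesis. Since $K_e$ is itself nonplanar, any drawing of $(G,\omega)$ induces at least one crossing inside $K_e$, and the goal is to choose $\omega$ so that at least one such forced crossing involves $e$ in every crossing-minimal drawing.

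I would proceed in four steps. First, reduce to the case in which $G$ is $2$-connected, since the remark following Question~\ref{question1} shows that crossing-criticality is essentially a per-block property, and one checks that ``every edge Kuratowski'' is inherited by each nonplanar block. Second, apply Lemma~\ref{thelemma} to produce balanced baseline weights on the relevant ``dual-like'' substructures attached to each $K_e$, playing the role that $\mu^*$ played in Proposition~\ref{pro:thereis}. Third, for each edge $e$, encode the statement ``$e$ is crossed in some crossing-minimal drawing of $(G,\omega)$'' as a finite system of linear inequalities on $\omega$, indexed (as in conditions (3)--(6) of Proposition~\ref{pro:ifthere}) by the feasible placements of the branch-vertices of $K_e$ in the various faces, together with shortest-path lengths through the dual-like structure of $K_e$. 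Fourth, verify feasibility of the combined system by a polytope argument and promote a rational solution to an integer one, scaling sufficiently to enforce quadratic dominance conditions analogous to (2) and (7) so that inter-subgraph crossings cannot undermine intra-subgraph criticality certificates.

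The hard part, which is essentially the full content of Conjecture~\ref{con:thecon}, will be the feasibility step. In Theorem~\ref{maintheorem2} we exploited that $G-uv$ embeds in the plane and carries a well-defined dual $\gd$, so the criticality conditions for the ``interior'' edges could be phrased uniformly in terms of shortest $F_uF_v$-paths and cleanly decoupled from the conditions at $u$ and at $v$. For an arbitrary Kuratowski-edge graph there is no such global planar frame, and the Kuratowski subgraphs $K_e$ can overlap in intricate ways, so two of them may impose incompatible weight preferences on a shared edge. Making these per-edge certificates simultaneously feasible will likely require a structural result about how Kuratowski subgraphs can cover a graph in which every edge is Kuratowski---perhaps in the spirit of \v{S}ir\'a\v{n}'s results~\cite{siran1,siran2}---combined with a more delicate feasibility argument than the direct polyhedral-duality one available in the main theorem.
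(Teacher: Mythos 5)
This statement is not a theorem of the paper: it is stated as an open conjecture, and the authors explicitly say that even the two-edge special case (making two prescribed Kuratowski edges simultaneously critical) already seems difficult. The only positive result the paper offers in this direction is the much weaker proposition that a \emph{single} Kuratowski edge can be made individually crossing-critical. Your proposal does not close this gap; it is an outline that defers the entire difficulty to a ``feasibility step'' which you yourself concede is ``essentially the full content'' of the conjecture. Identifying where the problem lives is not the same as solving it, so what you have written is a research plan, not a proof.

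Beyond that honest self-assessment, several of your intermediate steps do not survive scrutiny. The machinery of Propositions~\ref{pro:ifthere} and~\ref{pro:thereis} depends in an essential way on $G-uv$ being a $3$-connected planar graph: this gives a unique embedding, a well-defined planar dual $G^*_{u,v}$, and the identity $\cro(G,\omega)=\omega(uv)\cdot d_{\omega^*}(F_u,F_v)$, which is what lets criticality be encoded as linear conditions on shortest dual paths. For a general graph in which every edge is a Kuratowski edge there is no embedding, no dual, and no reason the crossing number of $(G,\omega)$ admits any shortest-path description, so your ``dual-like substructures attached to each $K_e$'' and the linear systems built on them are not defined objects. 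Moreover, even granting some per-edge certificate, distinct Kuratowski subgraphs $K_e$ and $K_f$ sharing edges can impose conflicting demands, and the quadratic dominance conditions analogous to (2) and (7) in Proposition~\ref{pro:ifthere} cannot all be satisfied simultaneously by brute scaling when the roles of ``large interior edges'' and ``small attachment edges'' are reversed between two overlapping subgraphs. (Your block reduction in the first step is fine, since every Kuratowski subgraph lies in a single block and crossing number is additive over blocks, but that is the only step that is actually complete.) The conjecture remains open, and your write-up should present this material as a proposed strategy with an explicitly unresolved core, not as a proof.
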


We remark that the converse of this statement is not true: 
\v{S}ir\'a\v{n}~\cite{siran1} gave examples of graphs that contain crossing-critical edges
that are not Kuratowski edges. 

The only positive result we have in this direction is that Kuratowski edges can
be made individually crossing-critical: 

\begin{proposition}
If $e$ is a Kuratowski edge of a graph $G$, then $e$ can be made
crossing-critical by a suitable multiplication of the edges of $G$.
\end{proposition}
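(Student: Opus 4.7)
I would choose any Kuratowski subgraph $H$ of $G$ that contains $e$ (one exists by hypothesis) and multiply every edge of $H$ other than $e$ by a large integer $N$, leaving $e$ itself and all edges of $G$ outside $H$ as single edges. In the weighted reformulation of Section~\ref{equivalent}, this is the weight assignment $\omega$ with $\omega(f) = N$ for each $f \in E(H) \setminus \{e\}$ and $\omega(f) = 1$ for every other edge (in particular for $e$). A preliminary observation I would record is that $H - e$ is planar: since $H$ is a subdivision of $K_{3,3}$ or $K_5$, the graph $H - e$ is a subdivision of a Kuratowski graph minus one edge (possibly with two pendant paths attached), which is manifestly planar.

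\textbf{Upper bound on $\cro(G,\omega)$.} Next I would bound $\cro(G,\omega)$ by exhibiting an explicit drawing. Starting from a planar embedding of $H - e$, I would add $e$ as an arbitrary plane curve between its endpoints, place the vertices of $V(G) \setminus V(H)$ anywhere in the plane, and draw the remaining edges of $G$ as arbitrary curves. The resulting drawing of $G$ has some finite number $T$ of pairwise edge crossings, depending only on $G$. Because $H - e$ is embedded planarly, no two of its edges cross, so each crossing involves at most one edge of weight $N$ and therefore contributes at most $N$ to the weighted cost. Hence $\cro(G,\omega) \leq T \cdot N$, and choosing $N > T$ yields the key inequality $\cro(G,\omega) < N^2$.

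\textbf{Forcing a crossing on $e$, and conclusion.} Let $D$ be a crossing-optimal drawing of $(G,\omega)$. A single crossing between two edges of $E(H) \setminus \{e\}$ would contribute $N^2$ to the cost, exceeding the budget $\cro(G,\omega) < N^2$; so no such crossing occurs in $D$. On the other hand, because $H$ is nonplanar, the subdrawing of $H$ inherited from $D$ must contain at least one crossing, and the only remaining possibility is a crossing between $e$ and some edge of $E(H) \setminus \{e\}$. Thus $e$ is crossed in $D$ by at least one edge of weight $N$, so the total weight of edges crossing $e$ in $D$ is at least $N$, giving $\cro(G,\omega_e) \leq \cro(G,\omega) - N < \cro(G,\omega)$ and proving that $e$ is crossing-critical in $(G,\omega)$. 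The only mildly technical point is the upper bound step --- verifying that $T$ is bounded independently of $N$ --- but this reduces to the elementary topological fact that any graph admits a drawing in the plane with finitely many crossings, so I do not anticipate a substantive obstacle.
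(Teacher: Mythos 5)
Your argument is correct, and it reaches the conclusion by a leaner route than the paper's. The paper fixes a drawing of the Kuratowski subdivision $H$ with exactly one crossing, between $e$ and a designated partner edge $f$, extends it to a drawing of $G$ with $p$ crossings, and then uses two weight levels ($p^2$ on $e$ and $f$, $p^4$ on the rest of $H$, $1$ elsewhere): the chosen drawing gives $\cro \le p^5$, while any drawing in which $e$ and $f$ do not cross costs at least $p^6$, so the crossing $e\times f$ is forced in every optimal drawing. You instead keep $e$ at weight $1$, put a single large weight $N$ on $E(H)\setminus\{e\}$, and use the planarity of $H-e$ to get an upper bound linear in $N$; then a crossing between two weight-$N$ edges costs $N^2$ and is excluded from any optimal drawing, so the unavoidable crossing inside the nonplanar $H$ must involve $e$. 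What your version buys is that you need neither the one-crossing drawing of $H$ nor the auxiliary edge $f$ --- the only geometric input is that $H-e$ is planar; what the paper's version buys is a concrete identification of \emph{which} crossing is forced (namely $e$ with $f$), which is closer in spirit to the two-edge conjecture stated at the end of the paper. Two small points you should make explicit (the paper elides the analogous ones): since $H$ is nonplanar, any drawing of it must have two \emph{distinct} edges crossing (self-intersections can always be removed without creating new crossings), which is what lets you conclude that $e$ meets a weight-$N$ edge; and since $\omega(e)=1$, decrementing $e$'s weight amounts to deleting $e$, which removes at least $N>0$ weighted crossings from the optimal drawing, giving $\cro(G,\omega_e)<\cro(G,\omega)$ as required.
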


\begin{proof}
Let $H$ be a subgraph of $G$,
homeomorphic to a Kuratowski graph, such that $e$ is
in $H$. Let $f$ be another edge of $H$ such that there is a drawing
$\dd_H$ of
$H$ with exactly one crossing, which involves $e$ and $f$. Extend
$\dd_H$ to a drawing $\dd$ of $G$. Let $p$ be the number of crossings
in $\dd$.  If $p=1$ then $e$ is already critical in $G$, so there is
nothing to prove. Thus we may assume that $p \ge 2$. 
Add $p^2-1$ parallel edges to each of $e$ and $f$, add $p^4-1$ parallel
edges to all edges in $H\setminus \{e,f\}$, and do not add any
parallel edge to the other edges of $G$. Let $G'$ denote the
resulting graph. 

We claim that $\cro(G')\le p^5$. To see this, consider the drawing
$\dd'$ of $G'$ naturally induced by $\dd$. It is easy to check that each crossing from $\dd$ yields at most
$p^4$ crossings in $\dd'$ (here we use that $e$ and $f$ are the only
edges in $H$ that cross each other in $\dd$). Thus $\dd'$ has at most
$p\cdot p^4=p^5$ crossings, and so 
 $\cro(G') \le p^5$, as claimed. 

On the other hand, it is clear that a drawing of $G'$ in which $e$ and $f$ do not
cross each other has at least $p^6$ crossings. Since $p^6 > p^5 \ge
\cro(G')$, it follows that no such drawing can be optimal. Therefore
$e$ and $f$ cross each other in every optimal drawing of $G'$. This
immediately implies that $e$ is critical in $G'$. 
\end{proof}

The immediate next step towards Conjecture~\ref{con:thecon} seems
already difficult enough so as to prompt us to state it:

\begin{conj}
Suppose that $e,f$ are Kuratowski edges of a graph $G$. Then there
exists a graph $H$, obtained by multiplying edges of $G$, such that
both $e$ and $f$ are crossing-critical in $H$. 
\end{conj}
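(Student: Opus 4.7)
The plan is to mirror the single-edge argument of the preceding proposition using twin Kuratowski witnesses, one chosen to force a crossing at $e$ and the other at $f$. I would begin by selecting Kuratowski subgraphs $H_e \ni e$ and $H_f \ni f$ of $G$. Since removing any edge from a subdivided Kuratowski graph leaves a planar graph, I may further choose a partner $e' \in H_e$ together with a one-crossing drawing $\dd_e$ of $H_e$ whose only crossing is $(e,e')$, and analogously $f' \in H_f$ together with $\dd_f$ whose only crossing is $(f,f')$.

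The geometric heart of the argument is to produce a single drawing $\dd$ of $G$ with three properties: (A) the restriction of $\dd$ to $H_e$ has $(e,e')$ as its only crossing; (B) the restriction to $H_f$ has $(f,f')$ as its only crossing; and (C) every crossing of $\dd$ that involves an edge of $(H_e \cup H_f) \setminus \{e,e',f,f'\}$ pairs it with an edge outside $H_e \cup H_f$. When $H_e$ and $H_f$ are edge-disjoint and can be drawn in separate regions of $G$, these properties are readily arranged. When $H_e$ and $H_f$ share edges, one must refine the choice of partners and Kuratowski subgraphs so that $\dd_e$ and $\dd_f$ can be made coherent on $H_e \cap H_f$; in the extreme case $H_e = H_f$, this forces one to work with a two-crossing drawing of $K_{3,3}$ or $K_5$ in which the two crossings are precisely $(e,e')$ and $(f,f')$. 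Let $p$ denote the number of crossings of $\dd$.

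Having $\dd$ in hand, I would multiply edges exactly as in the preceding proof: assign multiplicity $p^2$ to each of $e, e', f, f'$, multiplicity $p^4$ to every other edge of $H_e \cup H_f$, and leave every other edge of $G$ with multiplicity $1$. Call the resulting multigraph $G'$. Extending $\dd$ to $G'$ produces a drawing with at most $p^5$ crossings: the two designated crossings contribute $p^4$ each, while properties (A)--(C) guarantee that every other crossing of $\dd$ contributes at most $p^4$. For the lower bound, suppose some drawing $\dd^*$ of $G'$ does not cross $e$. Because $H_e$ is nonplanar, the drawing of $H_e$ induced by $\dd^*$ must contain at least one crossing, and it cannot involve $e$. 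The cheapest such crossing pairs $e'$ with another edge of $H_e$, at cost at least $p^2 \cdot p^4 = p^6 > p^5 \ge \cro(G')$, contradicting optimality. Hence $e$ is crossed in every optimal drawing of $G'$; the symmetric argument applies to $f$, so both edges are crossing-critical in $G'$.

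The main obstacle is the construction of $\dd$ satisfying (A), (B), and (C) when $H_e$ and $H_f$ share edges, particularly when $e \in H_f$, $f \in H_e$, or $H_e = H_f$. The crucial tool will be a lemma asserting that for any two distinct edges of a Kuratowski graph there is a two-crossing drawing in which those two edges appear in two distinct crossings with prescribed partners, together with a routing of the rest of $G$ that does not introduce crossings violating (C). I expect the proof of this coherent-drawing lemma, presumably by a case analysis on the combinatorial structure of $H_e \cap H_f$, to be the central technical component.
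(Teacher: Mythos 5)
This statement is not proved in the paper: it is stated as an open conjecture, explicitly flagged by the authors as the ``immediate next step'' that already ``seems difficult enough.'' So your proposal has to stand entirely on its own, and it does not: it has two genuine gaps, one of which you acknowledge and one of which you do not.

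The acknowledged gap is the ``coherent-drawing lemma'' producing a drawing of $G$ with properties (A)--(C). This is not a technical detail to be deferred; it is where the whole difficulty of the conjecture lives, and as stated it is not always achievable. Properties (A)--(C) force the induced drawing of $H_e\cup H_f$ to have no crossings other than $(e,e')$, $(f,f')$, and possibly crossings among $\{e,e',f,f'\}$. But $H_e\cup H_f$ is an essentially arbitrary union of two Kuratowski subdivisions and may have crossing number larger than the number of admissible crossing pairs, or may simply admit no drawing in which the crossings are confined to those pairs (for instance when $e$ and $f$ are adjacent and lie in a common $K_5$-subdivision, $(e,f)$ cannot be a crossing of a good drawing, and a two-crossing drawing with crossings exactly $(e,e')$ and $(f,f')$ need not exist for every choice of partners). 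Without this lemma the upper bound $\cro(G')\le p^5$ is unsupported.

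The unacknowledged gap is that your lower-bound argument is wrong whenever $H_e$ and $H_f$ overlap. You argue that a drawing of $G'$ not crossing $e$ must contain a crossing inside $H_e$ avoiding $e$, ``at cost at least $p^2\cdot p^4=p^6$.'' That computation assumes the partner of $e'$ in such a crossing has multiplicity $p^4$, which fails if $f$ or $f'$ lies in $H_e$: a crossing $(e',f)$ or $(f,f')$ costs only $p^2\cdot p^2=p^4\le p^5$, so optimal drawings of $G'$ are not forced to cross $e$. In the extreme case $H_e=H_f$ the scheme collapses outright, since the single crossing $(f,f')$ already makes the induced drawing of $H_e$ nonplanar while leaving $e$ uncrossed, at cost $p^4$. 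Your construction does work in the clean case where $H_e$ and $H_f$ are edge-disjoint \emph{and} a drawing with properties (A)--(C) exists, but that is far from the general statement; the overlapping configurations are exactly why the authors left this as a conjecture.
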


\vglue 0.8 cm
\noindent{\bf\Large Acknowledgements} 
\vglue 0.8 cm

We thank Jes\'us Lea\~nos for helpful discussions.






\end{document}